\documentclass[11pt]{amsart}

\usepackage{amssymb,geometry,stmaryrd,color}
\usepackage{amsmath}
\usepackage{amsfonts}
\usepackage{fullpage}

\usepackage{hyperref}
\hypersetup{colorlinks,linkcolor={red},citecolor={blue},urlcolor={blue}}

\usepackage{graphicx}
\usepackage{caption}
\begin{document}

\newtheorem{theorem}{Theorem}[section]
\newtheorem{lemma}[theorem]{Lemma}
\newtheorem{proposition}[theorem]{Proposition}
\newtheorem{corollary}[theorem]{Corollary}
\newtheorem{conjecture}[theorem]{Conjecture}
\newtheorem{example}{Example}

\newtheorem{definition}[theorem]{Definition}
\newtheorem{remark}[theorem]{Remark}
\newtheorem{notation}[theorem]{Notation}
\newtheorem{question}[theorem]{Question}
\newtheorem{fact}[theorem]{Fact}

\numberwithin{equation}{section}

\def\s{{\bf s}} 
\def\t{{\bf t}} 
\def\u{{\bf u}} 
\def\x{{\bf x}} 
\def\y{{\bf y}} 
\def\z{{\bf z}} 
\def\B{{\bf B}} 
\def\C{{\bf C}} 
\def\D{{\bf D}}
\def\K{{\bf K}}
\def\F{{\bf F}}
\def\M{{\bf M}}
\def\ML{{\bf ML}}
\def\Nn{{\bf N}}
\def\G{{\bf \Gamma}} 
\def\W{{\bf W}}
\def\X{{\bf X}}
\def\U{{\bf U}}
\def\V{{\bf V}}
\def\Un{{\bf 1}}
\def\Y{{\bf Y}}
\def\Z{{\bf Z}}
\def\P{{\bf P}}
\def\Q{{\bf Q}}
\def\S{{\bf S}}
\def\L{{\bf L}}
\def\T{{\bf T}}

\def\cB{{\mathcal{B}}} 
\def\cC{{\mathcal{C}}} 
\def\cD{{\mathcal{D}}} 
\def\cG{{\mathcal{G}}} 
\def\cK{{\mathcal{K}}} 
\def\cL{{\mathcal{L}}} 
\def\cR{{\mathcal{R}}} 
\def\cS{{\mathcal{S}}}
\def\cU{{\mathcal{U}}}
\def\cV{{\mathcal{V}}} 
\def\cX{{\mathcal X}}
\def\cY{{\mathcal Y}}
\def\cZ{{\mathcal Z}}

\def\Ea{E_\a}
\def\eps{{\varepsilon}} 
\def\esp{{\mathbb{E}}} 
\def\Ga{{\Gamma}}

\def\lacc{\left\{}
\def\lcr{\left[}
\def\lpa{\left(}
\def\lva{\left|}
\def\racc{\right\}}
\def\rpa{\right)}
\def\rcr{\right]}
\def\rva{\right|}

\def\prst{{\leq_{st}}}
\def\prost{{\prec_{st}}}
\def\prcvx{{\prec_{cx}}}
\def\Rr{{\bf R}}

\def\CC{{\mathbb{C}}}
\def\EE{{\mathbb{E}}}
\def\NN{{\mathbb{N}}} 
\def\QQ{{\mathbb{Q}}} 
\def\PP{{\mathbb{P}}}
\def\ZZ{{\mathbb{Z}}}
\def\RR{{\mathbb{R}}}

\def\Tt{{\bf \Theta}}
\def\Ttt{{\tilde \Tt}}

\def\a{\alpha}
\def\A{{\bf A}}
\def\AA{{\mathcal A}}
\def\hAA{{\hat \AA}}
\def\hL{{\hat L}}
\def\hT{{\hat T}}

\def\claw{\stackrel{d}{\longrightarrow}}
\def\elaw{\stackrel{d}{=}}
\def\pslaw{\stackrel{a.s.}{\longrightarrow}}
\def\qed{\hfill$\square$}

\newcommand*\pFqskip{8mu}
\catcode`,\active
\newcommand*\pFq{\begingroup
        \catcode`\,\active
        \def ,{\mskip\pFqskip\relax}%
        \dopFq
}
\catcode`\,12
\def\dopFq#1#2#3#4#5{%
        {}_{#1}F_{#2}\biggl[\genfrac..{0pt}{}{#3}{#4};#5\biggr]%
        \endgroup
}

\def\ii{{\rm i}}

\title[ID of powers of positive Stable r.v.]{Infinite divisibility of powers of positive stable random variables}

\author[M.~Wang]{Min Wang}

\address{School of Mathematics and Statistics, Wuhan University of Technology,  Wuhan, 430070, China}

\email{minwangmath@whut.edu.cn}

\keywords{Infinite divisibility; Self-decomposability; Generalized gamma convolutions; Stable distribution; hyperbolically completely monotone} 

\subjclass[2020]{60E07, 60E10}

\begin{abstract} 
   Let $Z_\alpha$ be a positive $\alpha$-stable random variable,
where $0<\alpha<1$. We prove that $Z_\alpha^p$ is infinitely
divisible for every $p>0$. Combining this result with Jedidi and Simon's characterization of infinite divisibility of negative powers of $Z_\alpha$, we obtain, for $p\ne0$,
$$Z_\alpha^p\ \text{is infinitely divisible}
\quad\Longleftrightarrow\quad
p\in\left(-\infty,-\frac{\alpha}{1-\alpha}\right]\cup(0,\infty).$$
The proof relies on Kanter's factorization and a general
product property: if $X$ is a mixture of exponential
distributions, $W$ is a generalized gamma convolution,
and $X$ and $W$ are independent, then $(X+c)W$ is infinitely
divisible for every $c\ge0$.
\end{abstract}

\maketitle

\section{Introduction}
 A real random variable $X$ is said to be stable if for any $a >0, b>0$, there exists $c >0, d \in \mathbb{R}$ such that 
\begin{equation}
\label{DefStable}
aX_1 + bX_2 \elaw c X +d,
\end{equation}
where $X_1, X_2$ are independent copies of $X$. The solution exists if and only if $c = (a^\alpha + b^\alpha)^{1/\alpha}$ for fixed $\alpha \in (0, 2]$. 
If $\a \neq 1$, \eqref{DefStable} is equivalent to 
$$a(X_1+r) + b(X_2+r) \elaw c (X+r), \quad r = \frac{d}{c-a-b}.$$
Except for the case $\alpha =1$ and $ d \neq 0$, every solution can be expressed as $Z_{\alpha,\rho}$,  up to a linear transformation, of which the characteristic function is 
$$E [ e^{it Z_{\alpha, \rho}} ]  = \exp [ - |t|^\alpha e^{i\pi\text{sgn}(t)(\frac{\alpha}{2} - \alpha\rho) } ], \quad t \in \mathbb{R},$$
where $\alpha \in (0,1], \rho \in [0,1]$ or $\alpha \in (1,2], \rho \in [1-1/\alpha, 1/\alpha]$. Here, $\rho$ is the positivity parameter, namely, $\rho = P[Z_{\alpha,\rho} \geq 0]$. 

In the present paper the infinite divisibility of $Z_\alpha^p$ is discussed, where  $Z_\alpha := Z_{\alpha,1},\, \alpha \in (0,1),$ is a positive $\alpha$-stable random variable and $p \in \mathbb{R}\setminus \{0\}$. 
Let's first recall the definitions of infinite divisibility, self-decomposability, generalized gamma convolutions and hyperbolically completely monotone densities. 
\subsection{Preliminaries}
A probability distribution on $[0, \infty)$ is called \emph{infinitely divisible} (ID) if, for every integer $n \ge 2$, it can be expressed as the convolution of $n$ identical probability distributions. Equivalently, its Laplace transform admits the %L\'evy--Khintchine 
representation
\begin{equation}
    \label{Levy-Khintchine}
    \mathbb{E}[e^{-\lambda X}] = \exp\left\{-a\lambda - \int_0^\infty (1 - e^{-\lambda x}) \,\nu(dx)\right\},
\end{equation}
where $a \ge 0$ and the L\'evy measure $\nu$ satisfies $\int_0^\infty (1 \wedge x) \,\nu(dx) < \infty$; see \cite[Theorem III.4.3]{SH04} or \cite[Theorem 3.2 and Theorem 5.9]{SSV10}. 

Several important subclasses arise by imposing additional structure on this representation.
A distribution is \emph{self-decomposable} (SD) if, for every $c \in (0, 1)$, one has
$$
X \stackrel{\mathrm{d}}{=} cX' + X_c,
$$
where $X' \stackrel{\mathrm{d}}{=} X$ and $X_c$ is independent of $X'$. On the nonnegative half-line, this property is equivalent to the representation \eqref{Levy-Khintchine} $\nu(dx) = k(x) \, dx/x$ with $k$ nonnegative and non-increasing. 
Requiring $k$ to be completely monotone, meaning that $(-1)^m k^{(m)}(x) \ge 0$ for every integer $m \ge 0$, yields the class of \emph{generalized gamma convolutions} (GGC). This class can also be characterized as the weak closure of finite convolutions of gamma distributions.

A more restrictive condition is hyperbolic complete monotonicity: a positive random variable has a \emph{hyperbolically completely monotone} (HCM) density $f$ if, for every $u > 0$, the product $f(uv)f(u/v)$ is completely monotone as a function of $w = v + v^{-1}$.

These classes satisfy the inclusions
$$
\text{HCM} \subset \text{GGC} \subset \text{SD} \subset \text{ID}.
$$

Another useful subclass of ID consists of \emph{mixtures of exponential distributions} (ME).
We allow mixtures of exponential distributions to have an atom
at zero. More precisely, a probability distribution belongs
to the class $\mathrm{ME}$ if it is the law of $\Gamma_1Y$,
where $\Gamma_1\sim\mathrm{Exp}(1)$ is independent of a
nonnegative random variable $Y$.
Equivalently, it is of the form
$$
\beta\delta_0+f(x)\,dx,
\qquad 0\le\beta\le1,
$$
where $f$ is completely monotone on $(0,\infty)$ and
$\beta+\int_0^\infty f(x)\,dx=1$. Its Laplace transform is a Stieltjes function with limit $1$
at zero; conversely, every such Stieltjes function is the
Laplace transform of a probability distribution in
$\mathrm{ME}$; see \cite[Definition 9.4 and Theorem 9.5]{SSV10}.

We refer to Bondesson \cite{Bon92}, Steutel and Van Harn \cite{SH04}, Sato \cite{Sat99} and Schilling-Song-Vondraček \cite{SSV10} for abundant properties of these subclasses.

\subsection{Results}
There are already some results in the literature. In 2013, Jedidi and Simon \cite[Proposition 2.1 and Theorem 2.1]{JS13} studied the property of negative powers of $Z_\alpha$ and proved that
\begin{itemize}
    \item[(1)] if $p < 0$, then $\, Z_\alpha^{p}$ is ID if and only if $p \leq - \frac{\alpha}{1-\alpha}$;
    \item[(2)] $\, Z_\alpha^{p}$ is SD if $\alpha \leq 1/2$ and $p \leq - \frac{\alpha}{1-\alpha}$;
    \item[(3)]  $\, Z_\alpha^{p}$ is GGC if $\alpha \in (0, 1/4]$ and $p \leq -4\alpha$. 
\end{itemize}

However, the ID property of positive powers of $Z_\alpha$ is not fully known. Patie \cite{Pat11} showed that  $Z_\alpha^\alpha$ is self-decomposable for every $\alpha \in (0,1)$. Bosch and Simon \cite[Section 3.2]{BS16} showed, without details of proof, that
$Z_\alpha^p$ is a generalized gamma convolution whenever
$p\ge\alpha/2$, and that it has an HCM density whenever
$\alpha\le1/2$ and $|p|\ge1$.
We recall these results and provide the proof in detail.

\begin{theorem}
\label{main theorem}
    The random variable $Z_\alpha^p$,
    \begin{enumerate}
        \item  is ID for all $\alpha \in (0,1)$ and $p > 0$. 
        \item  is GGC if $\alpha \in (0,1)$ and $p \geq \alpha/2$ or  $1/\alpha$ is an integer and $p > 0$.
        \item  is HCM if $\alpha \in (0,1/2]$ and $p \geq 1$.   
    \end{enumerate}
\end{theorem}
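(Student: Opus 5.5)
Part (1) will rest on Kanter's factorization
$$Z_\alpha^{-\alpha/(1-\alpha)} \elaw \Gamma_1 \, K_\alpha(U),$$
where $\Gamma_1\sim\mathrm{Exp}(1)$ is independent of $U$ uniform on $(0,\pi)$ and $K_\alpha$ is Kanter's function. Equivalently,
$$Z_\alpha \elaw \Gamma_1^{-(1-\alpha)/\alpha}\, K_\alpha(U)^{-(1-\alpha)/\alpha},$$
so that for $p>0$
$$Z_\alpha^p \elaw \Gamma_1^{-p(1-\alpha)/\alpha}\, L, \qquad L := K_\alpha(U)^{-p(1-\alpha)/\alpha}\ \text{independent of } \Gamma_1 .$$

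The plan is to bring this into the form $(X+c)W$ of the product property stated in the abstract:
\begin{quote}
\emph{if $X\in\mathrm{ME}$, $W\in\mathrm{GGC}$ are independent, then $(X+c)W$ is ID for all $c\ge 0$.}
\end{quote}
I would first prove that property. Since $\mathrm{GGC}$ is the closure of products with gamma variables, it suffices to show that $(X+c)\,\Gamma_t$ (and the limits) is ID. The idea is that $\mathrm{ME}$ is closed under multiplication by independent positive factors, so $XW$ is ME. Then $(X+c)W=XW+cW$, but these two terms are dependent. Instead I would use Bondesson's result that $Y\cdot W$ is GGC whenever $Y$ is GGC-compatible (e.g.\ $Y$ HCM, $W$ GGC). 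The more direct route is to compute the Laplace transform $\EE[e^{-\lambda(X+c)W}]$, using that $\EE[e^{-sX}]$ is a Stieltjes function $\beta+\int \frac{\mu(dx)}{1+s x}$, and to verify that $-\log$ of the transform is Bernstein.

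For the reduction itself, I would write $\Gamma_1^{-q}$, with $q=p(1-\alpha)/\alpha$, as a ratio involving gamma variables. Via the identity $\Gamma_1^{-1}\elaw$ an ME-type variable plus a constant in the appropriate mixed sense, together with the explicit decreasing/monotone structure of $K_\alpha$, I would show that $L$ has the form $(X+c)$ with $X\in\mathrm{ME}$. This uses that $x\mapsto K_\alpha(x)^{-1}$ on $(0,\pi)$ has a completely monotone density shape shifted by its minimum $c=K_\alpha(0)^{-\dots}$. Finally I would show that $\Gamma_1^{-q}$ is GGC for all $q>0$: for $q=1$ this is the inverse gamma, and powers $|r|\ge1$ of HCM variables are HCM. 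For $q<1$ I would instead absorb part of the power into $L$. \textbf{The main obstacle} is exactly this step: showing that the distribution of $K_\alpha(U)^{-r}$ minus its infimum is a mixture of exponentials, i.e.\ that its density is completely monotone, for all needed $r$. I would attack it by the change of variables and the log-concavity/convexity properties of $K_\alpha$ known from Kanter and from Jedidi--Simon.

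For (2), when $p\ge \alpha/2$, I would use the Mellin transform $\EE[Z_\alpha^{s}]=\Gamma(1-s/\alpha)/\Gamma(1-s)$ and the GGC criterion via Thorin/Bondesson: $Z_\alpha^{p}$ is GGC if its law is a limit of products of powers $\Gamma^{r}$ with $|r|\le1$, or equivalently if it has an HCM-type Mellin factorization. When $1/\alpha=n$ is an integer, Williams' identity gives $Z_{1/n}^{-1}\elaw n^n\prod_{k=1}^{n-1}\Gamma_{k/n}$. Hence $Z_\alpha^{p}$ is a product of negative powers of independent gammas, which is GGC for every $p>0$ by Bondesson's product theorem: $\Gamma_t^{-r}$ is HCM, and products of HCM variables are HCM for $|r|\ge1$, while for small $p$ one uses that products of GGC with HCM are GGC.

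For (3), when $\alpha\le1/2$ and $p\ge1$, I would use the fact that $Z_\alpha$ is HCM for $\alpha=1/n$, a result of Fourati/Bosch. For general $\alpha\le1/2$ I would rely on the known HCM property of $Z_\alpha$ for $\alpha\le 1/2$ (Jedidi--Simon), and conclude by the stability of HCM under powers $|p|\ge1$.
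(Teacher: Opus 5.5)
Your architecture for (1) is the paper's: Kanter's factorization $Z_\alpha^p\elaw\Gamma_1^{-q}L$ with $q=p(1-\alpha)/\alpha$, then $L=X+c$ with $X\in\mathrm{ME}$, $\Gamma_1^{-q}\in\mathrm{GGC}$, and the product property. However, none of these three ingredients is established, and one of your workarounds cannot work.

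\emph{Product property.} GGC is the weak closure of finite \emph{convolutions} of gamma laws, not of products. Reducing to $(X+c)\Gamma_t$ would not help anyway, because $(X+c)(W_1+W_2)=(X+c)W_1+(X+c)W_2$ has dependent summands that share $X$. Saying ``verify that $-\log$ of the transform is Bernstein'' only restates the goal. The missing idea is to divide by $\EE[e^{-csW}]$. Take $X\elaw\Gamma_1Y$ and $W=a+\sum_j\Gamma_{t_j}/\lambda_j$. The exponential tilt by $e^{-csW}$ turns $\Gamma_{t_j}/\lambda_j$ into $\Gamma_{t_j}/(\lambda_j+cs)$, so
\[
\frac{\EE[e^{-s(X+c)W}]}{\EE[e^{-csW}]}=\EE\Bigl[\Bigl(1+sY\bigl(a+{\textstyle\sum_{j}}\Gamma_{t_j}/(\lambda_j+cs)\bigr)\Bigr)^{-1}\Bigr].
\]
For fixed $y,u_j$, the function $1/s+y(a+\sum_j u_j/(\lambda_j+cs))$ is Stieltjes. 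Hence $f(s)=1+sy(a+\sum_ju_j/(\lambda_j+cs))$ is complete Bernstein and $1/f$ is Stieltjes. So the right-hand side is a mixture of Stieltjes functions equal to $1$ at $0+$, which gives $(X+c)W\elaw c\widetilde W+M$ with $M\in\mathrm{ME}$. A weak-limit argument then covers general $W$.

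\emph{The GGC factor when $q<1$.} This is the range $p<\alpha/(1-\alpha)$. It contains every $p<\alpha/2$, which is exactly where (1) is new, since (2) already gives GGC for $p\ge\alpha/2$. You cannot ``absorb part of the power into $L$''. For $p<\alpha$ one has $\EE[Z_\alpha^p]<\infty$ while $\EE[\Gamma_1^{-r}]=\infty$ for $r\ge1$, so no regrouping yields an independent factor $\Gamma_1^{-r}$ with $r\ge1$. Meanwhile $\Gamma_1^{-q}$ is not HCM for $q<1$, so HCM closure under powers is useless here. What you need is the theorem that $\Gamma_t^{-s}$ is GGC for every $s>0$ (Bosch--Simon).

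\emph{The ME step.} That $K_\alpha(U)^{-q}$ minus its infimum has a completely monotone density is Jedidi--Simon's Corollary~3.2, valid for every exponent. Log-concavity or convexity of Kanter's function gives nothing as strong as complete monotonicity, so cite that result rather than planning to reprove it this way.

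For (2) with $1/\alpha\notin\NN$ you give no argument. Your criterion ``limit of products of $\Gamma^{r}$, $|r|\le1$'' is not a GGC criterion: for $0<r<1$, $\Gamma_t^{r}$ has tail $e^{-x^{1/r}}$, which is too light for any non-degenerate ID law on $[0,\infty)$. The paper's route is as follows. It uses Bosch--Simon's a.s.\ convergent product $Z_\alpha^\alpha\elaw e^{\gamma(1-\alpha)}\prod_{n\ge0}b_nB^{-1}_{1+n/\alpha,\,1/\alpha-1}$, with explicit constants $b_n$. Then $Z_\alpha^p$ is a product of independent factors $B_{a,b}^{-r}$ with $r=p/\alpha$, $b=1/\alpha-1$ and $a=1+n/\alpha\ge1$. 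Next it proves $B_{a,b}^{-r}\in\mathrm{GGC}$ for these parameters whenever $r\ge1/2$, extending Bosch--Simon's criterion via $B_{a,b+c}\elaw B_{a,b}B_{a+b,c}$ and the multiplication formula. Finally it applies Bondesson's theorem that GGC is closed under independent products. The condition $r\ge1/2$ is exactly where the threshold $p\ge\alpha/2$ comes from.

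For $1/\alpha=n$, your Williams identity $Z_{1/n}^{p}\elaw n^{-np}\prod_{k=1}^{n-1}\Gamma_{k/n}^{-p}$ is correct. It gives a shorter route than the paper's, which relies on $b=n-1$ being an integer. But your justification fails for $p<1$: then no factor is HCM, so neither HCM closure nor ``GGC $\times$ HCM'' applies. You need precisely the two facts above: $\Gamma_t^{-s}\in\mathrm{GGC}$ for all $s>0$, and $\mathrm{GGC}\times\mathrm{GGC}\subset\mathrm{GGC}$.

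Part (3) is the paper's argument. Note only that the HCM property of $Z_\alpha$ for all $\alpha\le1/2$ is Bosch--Simon's proof of Bondesson's conjecture, not a result of Jedidi--Simon.
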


 Bosch and Simon \cite[page 631]{BS13} said:
``The present paper shows that all positive powers of $Z_{1/2}$ are SD, and one may wonder if the
same is true for $Z_\alpha$ with any $\alpha \in (0,1)$. "
This paper gives an affirmative answer on infinite divisibility. But the self-decomposability of $Z_\alpha^p$ is still open when $p \in (0, \alpha/2)$, $\alpha \in (0,1)$ and $1/\alpha$ is not an integer. 
 
 \begin{remark}
The proposition \ref{proposition ID closure} yields a stronger result.
Let $\mathrm{BO}$ denote the class of probability
distributions whose Laplace exponents are complete
Bernstein functions; see Chapter 9 in \cite{SSV10}. Since
$\mathrm{GGC}\subset\mathrm{BO}$,
$\mathrm{ME}\subset\mathrm{BO}$, and $\mathrm{BO}$ is
closed under independent addition, the decomposition
$$(X+c)W\stackrel d=c\widetilde W+M$$
shows that $(X+c)W\in\mathrm{BO}$.
Consequently, $Z_\alpha^p\in\mathrm{BO}$ for every
$0<\alpha<1$ and $p>0$.

Moreover, For $p\le-\alpha/(1-\alpha)$, Proposition 2.1 of
\cite{JS13} shows that $Z_\alpha^p$ has a completely
monotone density, hence belongs to
$\mathrm{ME}\subset\mathrm{BO}$. For $-\alpha/(1-\alpha)<p<0$, the same proposition
shows that $Z_\alpha^p$ is not infinitely divisible,
and therefore cannot belong to $\mathrm{BO}$. In conclusion, 
let $0<\alpha<1$ and $p\in\mathbb R\setminus\{0\}$, we have
$$Z_\alpha^p\in\mathrm{BO}
\quad\Longleftrightarrow\quad
Z_\alpha^p\in\mathrm{ID}
\quad\Longleftrightarrow\quad
p\in\left(-\infty,-\frac{\alpha}{1-\alpha}\right]\cup(0,\infty).$$

\end{remark}

\begin{notation}
\begin{enumerate}
    \item we denote the Gamma and Beta random variables by $\Gamma_{c}$ and  $B_{a, b}$, whose respective densities are
$$ \text{$ \frac{1}{\Gamma(c)}x^{c-1}e^{-x}\mathbf{1}_{(0, \infty)}(x) \quad $
and $\quad 
\frac{\Gamma(a+b)}{\Gamma(a)\Gamma(b)}x^{a-1}(1-x)^{b-1}\mathbf{1}_{(0, 1)}(x).$ }$$
    \item Throughout, unless otherwise explicitly stated, in any factorization of the type $X \elaw Y + Z$ or $X \elaw Y \times Z$, 
the random variables $Y, Z$ on the right-hand side will be assumed to be independent.
\end{enumerate}
\end{notation}

\textbf{Disclosure of AI assistance.}
The author used OpenAI's GPT-6 Astra to assist in proving Lemma~\ref{lemma continuous-stieltjes-mixtures} and to draw the figure at the end of the paper.

\section{Proof of Theorem \ref{main theorem} (1)} 
Recall that the fractional moments of a positive stable random variable are, see Zolotarev \cite[Theorem 2.6.3]{Zol86},
\begin{equation}
    E[Z_\alpha^{ps}] = \frac{\Gamma(1-ps/\alpha)}{\Gamma(1-ps)}, \quad p > 0, \;\; s < \alpha/p.
\end{equation}
We have the following identity:
\begin{equation}
    \label{eq decomposition}
    Z_\alpha^p \elaw \Gamma_1^{-\frac{1-\alpha}{\alpha}p} \times V_\alpha^{\frac{1-\alpha}{\alpha}p},
\end{equation}
where  $V_\a^{\frac{1-\a}{\a}}$  is the so-called Kanter random variable. Factorization \eqref{eq decomposition} was discovered by Kanter \cite[Corollary 4.1]{Kan75}. Afterwards, the Kanter random variable has been extensively studied in \cite{Dem11,JS13,HSW20}.
Theorem 4 in \cite{BS15} says $\Gamma_a^{-s}$ is GGC for every $a, s > 0$. We will also need Corollary 3.2 in \cite{JS13}, we rewrite it here for the readers' convenience.
\begin{lemma}[Corollary 3.2 in \cite{JS13}]
    \label{lemma CM property}
    For every $s > 0$, there exists $c_{\alpha,s} >0$ such that $V_\alpha^s -c_{\alpha,s}$ is \emph{ME}.
\end{lemma}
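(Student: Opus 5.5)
The plan is to work from Kanter's explicit representation rather than from transforms. Recall that $V_\alpha \elaw A(U)$, where $U$ is uniform on $(0,\pi)$ and
$$A(u)=\left(\frac{\sin^\alpha(\alpha u)\,\sin^{1-\alpha}((1-\alpha)u)}{\sin u}\right)^{\frac{1}{1-\alpha}}.$$
The function $A$ increases from $A(0+)=\alpha^{\alpha/(1-\alpha)}(1-\alpha)$ to $+\infty$ as $u\uparrow\pi$. So for each $s>0$ the natural choice is $c_{\alpha,s}=A(0+)^s$, which is the left endpoint of the support of $V_\alpha^s$.

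Let $h_s:(0,\infty)\to(0,\pi)$ be the inverse of $u\mapsto A(u)^s-c_{\alpha,s}$. Then $V_\alpha^s-c_{\alpha,s}$ has density $h_s'(x)/\pi$. This density is completely monotone exactly when $h_s$ is a Bernstein function, because $h_s\ge 0$ and $h_s(0+)=0$. By the description of $\mathrm{ME}$ recalled in the preliminaries, the lemma therefore reduces to the claim that $h_s$ is Bernstein for every $s>0$.

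\textbf{Step 1: reduce to a single logarithmic function.} Let $L(u)=\log A(u)-\log A(0+)$, which increases from $0$ to $\infty$ on $(0,\pi)$, and let $h_0$ be its inverse. Since $A^s-c_{\alpha,s}=c_{\alpha,s}(e^{sL}-1)$, we get
$$h_s(x)=h_0\!\left(\tfrac1s\log\!\left(1+\tfrac{x}{c_{\alpha,s}}\right)\right).$$
The inner map $x\mapsto \frac1s\log(1+x/c)$ is a (complete) Bernstein function, and a composition of Bernstein functions is Bernstein. Hence all $s>0$ follow at once from the single statement that $h_0$ is Bernstein, i.e.\ that $h_0'=1/L'(h_0)$ is completely monotone on $(0,\infty)$.

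\textbf{Step 2: analyse $L$.} Write
$$(1-\alpha)L(u)=\alpha\log\frac{\sin\alpha u}{\alpha u}+(1-\alpha)\log\frac{\sin(1-\alpha)u}{(1-\alpha)u}-\log\frac{\sin u}{u}.$$
Each term can be expanded using the product formula $\log\frac{\sin \pi z}{\pi z}=\sum_{n\ge1}\log(1-z^2/n^2)$. This gives $L$ as an explicit series of logarithms of rational functions, with singularities only at the points $u=\pm n\pi$, $\pm n\pi/\alpha$, $\pm n\pi/(1-\alpha)$.

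The goal is to show that $h_0$ extends holomorphically to $\mathbb C\setminus(-\infty,0]$ and maps the upper half-plane into itself. That would make $h_0$ a complete Bernstein function, hence Bernstein. I would try to show this by proving that $L$ maps a suitable region $D$ (containing $(0,\pi)$ and bounded by curves through $0$ and $\pi$) conformally onto the cut plane. This can be checked on the boundary of $D$ via the argument principle, using the sign of $\operatorname{Im} L$ read off from the series: the positive weights $\alpha,1-\alpha$ appear with the subtracted term $\log\frac{\sin u}{u}$, whose zeros $n\pi$ interlace with those of the other two terms.

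\textbf{Main obstacle.} Step 2 is the hard part, namely establishing the mapping property or univalence of $L$, or equivalently the complete monotonicity of $1/L'(h_0)$. If the conformal-mapping route fails, the fallback is to prove the weaker statement for large $s$ only. For $t\ge1$ the map $y\mapsto (y+c')^{1/t}-c'^{1/t}$ is Bernstein, so the same composition argument as in Step 1 propagates the Bernstein property upward in $s$. It would then suffice to verify complete monotonicity directly for one small exponent, which can be approached through the limiting behaviour of the density near $0$ and $\infty$.
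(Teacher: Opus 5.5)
Your Step~1 is correct and worth keeping. The choice $c_{\alpha,s}=A(0+)^s$ is forced, and the density of $V_\alpha^s-c_{\alpha,s}$ is $h_s'/\pi$. Composing with $\frac1s\log(1+x/c)$ reduces the lemma to the single claim that $\log(V_\alpha/A(0+))$ is ME, i.e.\ that $h_0$ is Bernstein. Nothing is lost here, since $(V_\alpha^s-c_{\alpha,s})/(s\,c_{\alpha,s})\to\log(V_\alpha/A(0+))$ as $s\to0$, and ME is closed under scaling and weak limits.

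\textbf{The gap.} That claim is the entire content of the lemma, and you do not prove it. Worse, your Step~2 aims at a false statement: $h_0$ is \emph{not} complete Bernstein. The function $e^{-(1-\alpha)L(u)}=\alpha^\alpha(1-\alpha)^{1-\alpha}\sin u\,/\bigl(\sin^\alpha(\alpha u)\sin^{1-\alpha}((1-\alpha)u)\bigr)$ is holomorphic near $u=\pi$ with a simple zero there. Hence $h_0(x)=\Phi\bigl(e^{-(1-\alpha)x}\bigr)$ for large real $x$, with $\Phi$ holomorphic near $0$. Any holomorphic extension of $h_0$ to the cut plane therefore agrees with $\Phi\bigl(e^{-(1-\alpha)x}\bigr)$ for large $\operatorname{Re}x$, so it is $2\pi i/(1-\alpha)$-periodic there. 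It thus takes the real value $h_0(x)$ at the point $x+2\pi i/(1-\alpha)$ of the upper half-plane, and it is not injective. Neither is possible for a nonconstant Pick function or for the inverse of a conformal map. For example, when $\alpha=1/2$ one has $V_{1/2}=1/(4\cos^2(U/2))$ and $h_0(x)=2\arccos(e^{-x/2})$. This is Bernstein, but it is $4\pi i$-periodic.

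So no region $D$ of the kind you describe exists, and no argument-principle reasoning on $L$ can close Step~2. The fallback does not rescue this either. Propagating upward in $s$ only covers $s\ge s_0$, whereas the lemma, and its use in Theorem~\ref{main theorem}(1) for all $p>0$, needs every $s>0$. Moreover, asymptotics of a density at $0$ and $\infty$ can never establish complete monotonicity.

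\textbf{How to close it.} The missing idea is to use transforms after all (the paper itself simply imports the lemma from \cite{JS13}). Kanter's factorization and the moments of $Z_\alpha$ and $\Gamma_1$ give $\mathbb E[V_\alpha^{-\lambda}]=\Gamma(1+\mu)/\bigl(\Gamma(1+\alpha\mu)\Gamma(1+(1-\alpha)\mu)\bigr)$ with $\mu=\lambda/(1-\alpha)$. Hence the Laplace transform of $\log(V_\alpha/A(0+))$ is
\[
f(\mu)=\frac{\alpha^{\alpha\mu}(1-\alpha)^{(1-\alpha)\mu}\,\Gamma(1+\mu)}{\Gamma(1+\alpha\mu)\,\Gamma(1+(1-\alpha)\mu)}
=\exp\left(-\int_0^\infty\frac{\mu\,\eta(t)}{t\,(t+\mu)}\,dt\right),
\qquad
\eta(t)=\lfloor t\rfloor-\lfloor\alpha t\rfloor-\lfloor(1-\alpha)t\rfloor .
\]
The second equality follows by differentiating in $\mu$ and using $\psi(1+z)=-\gamma+\sum_{k\ge1}\bigl(\frac1k-\frac1{k+z}\bigr)$. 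The prefactor $\alpha^{\alpha\mu}(1-\alpha)^{(1-\alpha)\mu}$ is exactly what cancels the constant $-\alpha\log\alpha-(1-\alpha)\log(1-\alpha)$. That constant appears when the three digamma series are rewritten as integrals against $\lfloor t\rfloor$, $\lfloor\alpha t\rfloor$ and $\lfloor(1-\alpha)t\rfloor$.

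Since $\eta$ takes only the values $0$ and $1$, this is the exponential representation of a Stieltjes function with $f(0)=1$. For irrational $\alpha$ it simply expresses that the poles $-k$ and the zeros $-n/\alpha$, $-m/(1-\alpha)$ interlace, by Beatty's theorem. Hence $\log(V_\alpha/A(0+))$ is ME, so $h_0$ is Bernstein. Its L\'evy measure is carried by $(1-\alpha)\mathbb N$, consistent with the periodicity above. Your Step~1 then gives the lemma for every $s>0$.
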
 

Our proof is based on the observation that the above factorization \ref{eq decomposition} and Lemma \ref{lemma CM property} imply that every positive power of positive stable random variable has the form $(X + c)W$, where $c > 0$, $X$ is ME and $W$ is GGC. The following proposition completes the proof.

\begin{proposition}
    \label{proposition ID closure}
    Let $X$ and $W$ be independent nonnegative random variables,
with $X\in\mathrm{ME}$ and $W\in\mathrm{GGC}$.
For every $c\ge0$, there exist independent nonnegative
random variables $\widetilde W$ and $M$ such that
$$\widetilde W\stackrel d=W,\qquad
M\in\mathrm{ME},\qquad
(X+c)W\stackrel d=c\widetilde W+M.$$
In particular, $(X+c)W$ is infinitely divisible.
\end{proposition}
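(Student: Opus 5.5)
The plan is to use the self-decomposability of $W$ to split off a copy of $cW$ conditionally on $X$, and then show that the remainder is $\mathrm{ME}$ by checking that its Laplace transform is a Stieltjes function. The case $c=0$ is classical: writing $X\elaw\Gamma_1Y$, we get $XW\elaw\Gamma_1(YW)\in\mathrm{ME}$. So assume $c>0$. Since $W\in\mathrm{GGC}\subset\mathrm{SD}$, for every $s\in(0,1)$ we have $W\elaw sW'+W_s$ with $W'\elaw W$ independent of $W_s$. Conditionally on $X=x$, take $s=c/(x+c)$ to get
$$(x+c)W\elaw cW'+(x+c)W_{c/(x+c)}.$$
Mixing over the law of $X$, with $W'$ drawn independently of everything, gives $(X+c)W\elaw c\widetilde W+M$. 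Here $\widetilde W\elaw W$ is independent of $M$, and conditionally on $X=x$ the variable $M$ has the law of $(x+c)W_{c/(x+c)}$. Infinite divisibility then follows at once from $\mathrm{GGC},\mathrm{ME}\subset\mathrm{ID}$ and closure of $\mathrm{ID}$ under convolution. Everything therefore reduces to showing $M\in\mathrm{ME}$.

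Write $h(\lambda)=\esp[e^{-\lambda W}]$. Using $X\elaw\Gamma_1Y$ and integrating out $\Gamma_1$ conditionally on $Y=y$ and $W$, the Laplace transform of $M$ becomes
$$\esp[e^{-\lambda M}]=\esp_Y\!\left[\frac{\esp\big[e^{-\lambda cW}/(1+\lambda yW)\big]}{h(\lambda c)}\right]
=\esp_Y\,\esp\!\left[\frac{1}{1+\lambda y W^{(\lambda)}}\right],$$
where $W^{(\lambda)}$ is the Esscher tilt of $W$ with density proportional to $e^{-\lambda cw}$. Stieltjes functions form a convex cone that is closed under pointwise limits. It is therefore enough to show, for each fixed $y\ge0$, that
$$\phi_y(\lambda)=\frac{\esp[e^{-\lambda cW}/(1+\lambda yW)]}{\esp[e^{-\lambda cW}]}$$
is Stieltjes. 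Since $\phi_y(0)=1$, this makes $\esp[e^{-\lambda M}]$ a Stieltjes function with value $1$ at $0$, and hence the Laplace transform of an $\mathrm{ME}$ law, as recalled in the Preliminaries.

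First I will check the gamma case $W=\Gamma_a/t$. There the tilt is explicit, $W^{(\lambda)}\elaw\Gamma_a/(t+\lambda c)$. Conditionally on $\Gamma_a=g$ the integrand is
$$\frac{1+\lambda c/t}{1+\lambda(c+yg)/t}=\frac{c}{c+yg}+\frac{yg}{c+yg}\cdot\frac{1}{1+\lambda(c+yg)/t},$$
which is a nonnegative combination of $1$ and $1/(1+\lambda d)$, so it is Stieltjes. Averaging over $g$ keeps it Stieltjes.

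The main obstacle is the passage from the gamma case to a general GGC $W$, because products of Stieltjes functions are not Stieltjes, so convolution structure alone does not help. I will attack this with the analytic characterization: a nonnegative function on $(0,\infty)$ is Stieltjes if and only if it extends holomorphically to $\CC\setminus(-\infty,0]$ with $\mathrm{Im}\,\phi(z)\le0$ for $\mathrm{Im}\,z>0$. The idea is to write $\phi_y(z)$ as a ratio $N(z)/D(z)$ with $D(z)=h(cz)$ zero-free off $(-\infty,0]$; this holds because $-\log h$ is a complete Bernstein function, given by $\int\log(1+\lambda/t)\,U(dt)$. The sign of $\mathrm{Im}(N\overline D)$ should then be controlled by the Thorin representation, reducing to the one-gamma computation above via the monotone argument of $1+z/t$. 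If that direct argument stalls, the fallback is to establish the claim for finite gamma convolutions by an explicit partial-fraction or induction argument. I would then conclude by weak approximation: GGC is the weak closure of such convolutions, $\phi_y$ depends continuously on the law of $W$ for each $\lambda>0$, and the Stieltjes class is closed under pointwise limits.
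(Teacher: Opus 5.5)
Your reduction is sound. The self-decomposability construction of $M$ is legitimate, and the Laplace transform $H(\lambda)/h(c\lambda)$ it produces is exactly the function the paper calls $R$. Conversely, once $R$ is known to be Stieltjes with $R(0+)=1$, the decomposition follows directly, so self-decomposability is not even needed. Your one-gamma computation is correct, and your weak-approximation step at the end is fine.

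\textbf{The gap.} The heart of the proposition is not proved: that $\phi_y$ is Stieltjes when $W$ is a drift plus a sum of several independent scaled gamma variables. The analytic route (``the sign of $\mathrm{Im}(N\overline D)$ should be controlled by the Thorin representation'') names no mechanism, and the ``fallback'' is only a placeholder. As it stands, you have the proposition only for $W\elaw\Gamma_a/t$. Your approximation argument cannot bridge this, because it needs the finite-convolution case as input.

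\textbf{The missing step.} It is short and uses the coupling you already found. For $W\elaw a+\sum_{j=1}^n\Gamma_{\alpha_j}/b_j$, independence gives $W^{(\lambda)}\elaw a+\sum_{j}\Gamma_{\alpha_j}/(b_j+c\lambda)$, with the same gamma variables for every $\lambda$. So, conditionally on $\Gamma_{\alpha_j}=g_j$, the integrand is $1/f(\lambda)$, where
\[
f(\lambda)=1+\lambda y\,w(\lambda),\qquad w(\lambda)=a+\sum_{j=1}^n \frac{g_j}{b_j+c\lambda}.
\]
Now $f(\lambda)/\lambda=1/\lambda+y\,w(\lambda)$ is a Stieltjes function. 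Hence $f$ is a complete Bernstein function and $1/f$ is Stieltjes (Theorems 6.2 and 7.3 of \cite{SSV10}). Averaging over $(y,g_1,\dots,g_n)$, by bounded convergence and the cone and limit closure properties, gives the claim. This is what the paper does.

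Your explicit partial fraction did exactly this for $n=1$. For $n\ge2$, a partial-fraction proof would require showing that the zeros and poles of the rational function $f$ are real, nonpositive and interlacing. That is precisely what the complete-Bernstein argument supplies for free. Your observation that products of Stieltjes functions need not be Stieltjes is correct. It is also why the multiplicative structure of $h$ is a red herring: the relevant structure is additive, inside the reciprocal.
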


\begin{proof}[Proof of Proposition \ref{proposition ID closure}]
If $c=0$, $XW$ is again ME, therefore ID. We set $c >0$. 
Write $X\stackrel d=\Gamma_1Y$, where
$\Gamma_1\sim\mathrm{Exp}(1)$, and choose $\Gamma_1$, $Y$,
and $W$ to be independent. Define
$$L(s)=\mathbb E[e^{-sW}],\qquad
H(s)=\mathbb E[e^{-s(X+c)W}],\qquad
R(s)=\frac{H(s)}{L(cs)},\qquad s>0.$$
The denominator is strictly positive. We will prove that
$R$ is a Stieltjes function and that $R(0+)=1$.

Recall that a GGC is a weak limit of convolutions of a finite number of gamma distributions. As the GGC class is closed with respect to weak limits, it suffices to consider
the finite case where $W$ is a sum of $n$ gamma distributed variables. More precisely, we suppose that
\begin{equation}
    W \elaw a + \sum_{j=1}^n \frac{1}{b_j} \Gamma_{\alpha_j},
\end{equation}
where $a \geq 0, b_j > 0, \alpha_j > 0$ and all random variables are independent. Conditioning on $(Y,W)$ gives
$$H(s)=\mathbb{E}\left[\frac{e^{-csW}}{1+sYW}\right].$$
By direct computation, 
\begin{equation}
    L(s) = e^{-as}\prod_{j=1}^{n}\left(1+\frac{s}{b_j}\right)^{-\alpha_j}.
\end{equation}
%\newpage
\begin{align}
   &R(s) = \frac{H(s)}{L(cs)}\\
     %=&  \mathbb{E}\left[\frac{e^{-sc\sum_{j=1}^n \frac{1}{b_j} \Gamma_{\alpha_j}}}{1 + sY(a + \sum_{j=1}^n \frac{1}{b_j} \Gamma_{\alpha_j})} \frac{1}{ \prod_{j=1}^{n}\left(1+\frac{cs}{b_j}\right)^{-\alpha_j}} \right]   \\
     =& \mathbb{E}\left[ \int_0^\infty \cdots \int_0^\infty \frac{1}{ 1 + sY(a + \sum_{j=1}^n \frac{1}{b_j}u_j)}     \frac{e^{-sc\sum_{j=1}^n \frac{1}{b_j} u_j}}{ \prod_{j=1}^{n}\left(1+\frac{cs}{b_j}\right)^{-\alpha_j}} \prod_{j=1}^{n} \frac{1}{\Gamma(\alpha_j)} u_j^{\alpha_j -1} e^{-u_j} du_j  \right] \\
     & (\text{set} \;\; u_j = \frac{b_j}{b_j + cs} w_j ) \\
    = & \mathbb{E}\left[ \int_0^\infty \cdots \int_0^\infty \frac{1}{ 1 + sY(a + \sum_{j=1}^n \frac{w_j}{b_j + cs})} \prod_{j=1}^{n} \frac{1}{\Gamma(\alpha_j)} w_j^{\alpha_j -1} e^{- w_j}  dw_j \right]  \\
    = & \mathbb{E}\left[ \frac{1}{ 1 + sY(a + \sum_{j=1}^n \frac{\Gamma_{\alpha_j}}{b_j + cs})} \right].  
\end{align}
For fixed $y > 0$ and $u_j > 0$, the function 
\begin{equation}
\label{eq func1}
    s \mapsto \frac{1}{ 1 + sy(a + \sum_{j=1}^n \frac{u_j}{b_j + cs})} \quad \text{is a Stieltjes function.}
\end{equation}
 Indeed, by the definition of Stieltjes function (see Definition 2.1 in \cite{SSV10}), 
\begin{equation}
\label{eq func2}
    s \mapsto  \frac{1}{s} + y(a + \sum_{j=1}^n \frac{u_j}{b_j + cs}) \quad \text{is a Stieltjes function.}
\end{equation}
By Theorem 6.2 and Theorem 7.3 in \cite{SSV10}, we have the following equivalence:
\begin{center}
$f(s)/s$ is a Stieltjes function $\; \Longleftrightarrow \;$  $f $ is a complete Bernstein function 
\end{center}
\begin{center}
$\; \Longleftrightarrow \;$ $1/f $ is a Stieltjes function, 
\end{center}
where $f$ is a non-negative function on $(0,\infty)$, $f \not\equiv 0 $ and $1/f \not\equiv 0$. By this equivalence, \eqref{eq func1} can be derived from \eqref{eq func2}. 
We then finish the proof by the following lemma. 

\begin{lemma}%[Mixtures of continuously parametrized Stieltjes functions]
\label{lemma continuous-stieltjes-mixtures}
Let $d\ge1$, and let $\mu$ be a Borel probability measure on
$[0,\infty)^d$. Suppose that
$$
    K:(0,\infty)\times[0,\infty)^d\longrightarrow[0,1]
$$
satisfies the following conditions:
\begin{enumerate}
    \item For every $\theta\in[0,\infty)^d$, the function
    $s\mapsto K(s;\theta)$ is a Stieltjes function.
    \item For every $s>0$, the map
    $\theta\mapsto K(s;\theta)$ is continuous.
\end{enumerate}
Then
$$
    R(s):=\int_{[0,\infty)^d}K(s;\theta)\,\mu(d\theta),
    \qquad s>0,
$$
is a Stieltjes function.

Moreover, if $K(0+;\theta)=1$ for $\mu$-almost every $\theta$,
then $R(0+)=1$.
\end{lemma}

\begin{proof}
For each integer $m\ge1$, define
$$
    \pi_m(x):=
    \min\left\{m,\frac{\lfloor mx\rfloor}{m}\right\},
    \qquad x\ge0,
$$
and apply this map coordinatewise:
$$
    \Pi_m(\theta):=
    \bigl(\pi_m(\theta_1),\ldots,\pi_m(\theta_d)\bigr),
    \qquad
    \theta=(\theta_1,\ldots,\theta_d)\in[0,\infty)^d.
$$
Then $\Pi_m$ takes only finitely many values, and
$$
    \Pi_m(\theta)\longrightarrow\theta,
    \qquad\text{as }m\to\infty,
$$
for every $\theta\in[0,\infty)^d$.

Define
$$
    R_m(s):=
    \int_{[0,\infty)^d}
    K(s;\Pi_m(\theta))\,\mu(d\theta).
$$
Enumerate the distinct values of $\Pi_m$ as
$\theta_{m,1},\ldots,\theta_{m,N_m}$, and set
$$
    p_{m,k}:=
    \mu\bigl(\{\theta:\Pi_m(\theta)=\theta_{m,k}\}\bigr).
$$
Thus
$$
    R_m(s)=
    \sum_{k=1}^{N_m}p_{m,k}K(s;\theta_{m,k}),
    \qquad
    p_{m,k}\ge0,
    \qquad
    \sum_{k=1}^{N_m}p_{m,k}=1.
$$
Each function $K(\,\cdot\,;\theta_{m,k})$ is Stieltjes.
Since the Stieltjes class is a convex cone (see \cite[Theorem 2.2 (ii)]{SSV10}), $R_m$ is
a Stieltjes function.

Fix $s>0$. By continuity in the parameter,
$$
    K(s;\Pi_m(\theta))
    \longrightarrow K(s;\theta)
$$
for every $\theta\in[0,\infty)^d$.
Furthermore,
$$
    0\le K(s;\Pi_m(\theta))\le1.
$$
Since $\mu$ is a probability measure, bounded convergence
therefore yields
$$
    R_m(s)\longrightarrow
    \int_{[0,\infty)^d}K(s;\theta)\,\mu(d\theta)
    =R(s).
$$
The Stieltjes class is closed under finite pointwise limits; see \cite[Theorem 2.2 (iii)]{SSV10}.
Consequently, $R$ is a Stieltjes function.

Finally, if $K(0+;\theta)=1$ for $\mu$-almost every $\theta$,
another application of bounded convergence gives
$$
    R(0+)
    =\int_{[0,\infty)^d}K(0+;\theta)\,\mu(d\theta)
    =1.
$$
\end{proof}

Since GGC and ME are
infinitely divisible, their independent sum is infinitely
divisible. This completes the proof.

\end{proof}

\section{Proof of Theorem \ref{main theorem} (2)}
The main idea behind the proof of the GGC property is to represent a positive stable random variable as an infinite product of independent beta random variables. Then we use Theorem 1 in \cite{Bon15} which stated that if $X$ and $Y$ are two independent GGCs, then $XY$ is also a GGC. We call this property the closure of GGC class under independent multiplication. 
We need the following lemma.

\begin{lemma}
\label{lemma Beta inverse}
    Let $a, b, p > 0$. The random variable $B_{a,b}^{-p}$ is GGC if at least one of the following conditions holds:
    \begin{enumerate}
        \item $b > 1$ and $p \geq 1/2$; 
        \item $b$ is a positive integer;
        \item $p$ is a positive integer;       
        \item  $0 < b < 1$,  $p \geq 1/2$, and $a + b +p -\lfloor p\rfloor \geq 1$.  
    \end{enumerate}
    where $\lfloor p\rfloor$ is the integer part of $p$. 
\end{lemma}

\begin{remark}
\begin{itemize}
    \item[(a)]  Jedidi and Simon \cite[Conjecture 3.2]{JS13} conjectured that the random variable $B_{a,b}^{-p}$ is GGC for every $a, b > 0$ and every $p \geq 1$. Lemma \ref{lemma Beta inverse} partly confirms this conjecture.  
    The case $b < 1$, $p > 1$,  $p \notin \mathbb{N}$ and $a + b + p - [p] < 1$ is still open.
    \item[(b)] Bosch and Simon \cite[Section 3.1]{BS15} discussed the GGC property of $B_{a,b}^{-p}$, and asserted that for all $a > 0$, there are some $b \in (0,1), p \in (0,1)$ such that $B_{a,b}^{-p}$ is not a GGC. 
\end{itemize}
\end{remark}

\begin{proof}[Proof of Lemma \ref{lemma Beta inverse}]
    Firstly, by \cite[Theorem 2]{BS15}, $B_{a,b}^{-p}$ is GGC if one of the following three conditions holds.
    \begin{enumerate}
        \item $b > 1$ and $p >1$. 
        \item $b =1$ or $p = 1$.         
        \item  $b < 1$,  $p \in [1/2,1)$, and $a + b +p \geq 1$.  
    \end{enumerate}

    Secondly, Example VI.12.21 in Steutel and Van Harn \cite{SH04} stated that $- \log B_{a,b}$ is GGC if and only if 
    $b$ is an integer. Then the GGC property of  $B_{a,n}^{-p}$ can be deduced from Theorem 3 in Bondesson \cite{Bon15}, which said that if $X$ is a GGC with left extremity $a$, then $e^{X}-e^a$ is also a GGC. 

    Finally, we need to prove three cases: 
    \begin{itemize}
        \item[(i)] $b > 1$, $b \notin \mathbb{N}$ and $p \in [1/2, 1)$.
        \item[(ii)]  $p$ is an integer and $p \geq 2$.
        \item[(iii)]  $b < 1$,  $p > 1$, $p \notin \mathbb{N}$, and $a + b +p - [p]\geq 1$.
    \end{itemize}

Malmstén's formula for the Gamma function ($\psi$ is the digamma function), $-$see, for example, formula 1.9(1), page 21, in Erd\'elyi et al. \cite{EMOT53}
$$ \frac{\Gamma(z + a)}{\Gamma(a)} = \exp \lacc \psi(a)z + \int_0^\infty (e^{-zx} - 1 + zx)\frac{e^{-ax} dx}{x(1-e^{-x})}\racc, \,\, z > -a, \, a > 0, $$
entails the expression of fractional moments of Beta random variable
\begin{equation}
    \label{Malmsten Beta}
    E[ B_{a,b}^{s}] = \frac{\Gamma(a+s)\Gamma(a+b)}{\Gamma(a+b+s)\Gamma(a)} = \exp \left( - \int_0^\infty (1-e^{-sx}) \frac{e^{-ax} - e^{-(a+b)x}}{x(1-e^{-x})} dx \right), \quad s > -a. 
\end{equation}
We then deduce the following two identities in distribution
\begin{equation}
    \label{Id beta 1}
    B_{a, b+c} \stackrel{d}{=} B_{a,b} \times B_{a+b,c}, \quad a,b,c > 0,
\end{equation}
and 
\begin{equation}
    \label{Id beta 2}
    B_{a, b} \stackrel{d}{=}  \prod_{k=0}^n B_{\frac{k+a}{n+1},\frac{b}{n+1}}^{\frac{1}{n+1}} , \quad a, b > 0,  n \in \mathbb{N}, 
\end{equation}
by comparing their fractional moments. \\
In particular, by \eqref{Id beta 1} we have 
\begin{equation}
\label{Id beta 3}
    B_{a, b}^{-p} \stackrel{d}{=} B_{a,n}^{-p} \times B_{a+n,b-n}^{-p}, \quad 1 \leq n < b <n+1, n \in \mathbb{N}. 
\end{equation}
By \eqref{Id beta 2} we have 
\begin{equation}
\label{Id beta 4}
    B_{a, b}^{-p} \stackrel{d}{=}\prod_{k=0}^n B_{\frac{k+a}{n+1},\frac{b}{n+1}}^{-\frac{p}{n+1}}, \quad 1 \leq n < p \leq n+1, n \in \mathbb{N}. 
\end{equation}

In case (i), put $n=\lfloor b\rfloor$ and use
$$
B_{a,b}^{-p}\stackrel d=
B_{a,n}^{-p}B_{a+n,b-n}^{-p}.
$$
The first factor is a GGC because $n$ is a positive integer.
For the second factor, $0<b-n<1$ and
$(a+n)+(b-n)+p=a+b+p>1$, so the known criterion for
$1/2\le p<1$ applies.

In case (ii), write $p=N$, where $N\ge2$ is an integer.
The beta product identity gives
$$
B_{a,b}^{-N}\stackrel d=
\prod_{k=0}^{N-1}B_{(a+k)/N,b/N}^{-1}.
$$
Each factor is a GGC by the case $p=1$.

In case (iii), set $n=\lfloor p\rfloor$, $m=n+1$, and
$q=p/m$. Then $1/2<q<1$ and
$$
B_{a,b}^{-p}\stackrel d=
\prod_{k=0}^{m-1}B_{(a+k)/m,b/m}^{-q}.
$$
For every $k=0,\ldots,m-1$, we have $0<b/m<1$ and
$$
\frac{a+k}{m}+\frac bm+q
=\frac{a+b+p+k}{m}\ge1,
$$
where the last inequality follows from
$a+b+p-\lfloor p\rfloor\ge1$.
The same criterion therefore shows that every factor is
a GGC. In all three cases, the desired conclusion follows
from closure of the GGC class under independent multiplication.
%Combining \eqref{Id beta 3} with the closure of GGC class under independent multiplication, we can prove the case (i). Similarly, using \eqref{Id beta 4} we can prove the case (ii) and (iii). 
\end{proof}

We continue to prove Theorem \ref{main theorem} (2). 
For every $\alpha \in (0, 1)$, one has the a.s. convergent factorization; see Bosch and Simon \cite[page 37]{BS16}
    \begin{equation}
        \label{stablefactorizetion}
         Z_\alpha^{\alpha} \elaw e^{\gamma(1-\alpha)} \prod_{n=0}^{\infty} b_n B_{1+n/\alpha, 1/\alpha - 1}^{-1}, 
    \end{equation}
    where $\gamma$ is Euler’s constant, $\psi$ is the digamma function and $b_n = e^{\psi(1+\frac{n}{\alpha})-\psi(\frac{n+1}{\alpha})}$. Then the conclusion follows from the closure of GGC class under independent multiplication and Lemma \ref{lemma Beta inverse}.

\section{Proof of Theorem \ref{main theorem} (3) and some remarks}
The HCM class has the property that if $X$ is HCM, then $X^q$ is HCM for $|q| \geq 1$, see e.g. \cite[Proposition 4]{Bon15}. 
 Bosch and Simon \cite[Theorem, page 32]{BS16} proved that $\, Z_\alpha$ is HCM if and only if $\alpha \in (0, 1/2]$. Combining these two results, we can prove Theorem \ref{main theorem} (3).

\begin{remark}
   Bosch \cite{Bos15} proved that $Z_\alpha^p$ is not HCM if $\alpha \in (1/2, 1)$ or $|p| < \frac{\alpha}{1-\alpha}$. He asked whether $\alpha \in (0,1/2]$ and $|p| \geq \frac{\alpha}{1-\alpha}$ is sufficient for $Z_\alpha^p$ to be HCM. The answer is negative since
   Hasebe, Simon and Wang \cite[Remark 9]{HSW20} proved that $Z_{\alpha}^{-\frac{\alpha}{1-\alpha}}$ is not HCM for $\alpha < 1/5$. 
\end{remark}

\begin{remark} We consider $Z_\alpha^p$ with $\alpha \in (1/2,1)$ and $p \leq -\alpha/(1-\alpha)$. Jedidi and Simon \cite[Proposition 2.1]{JS13} proved that it is ID. The condition for it to be SD is not known. Yamazato \cite{Yam78} proved that every self-decomposable distribution has a unimodal density function. Simon \cite{Sim12} found the sufficient and necessary condition for the unimodality of $Z_\alpha^p$. More precisely, 
    $Z_\alpha^p$ is unimodal if and only if $(\alpha, p) \notin D$ where 
    $$ D = \{(\alpha, p)| \alpha > 1/2, \, -R(\alpha) < p < -\alpha\}, $$
    and $R(\alpha)$ has the following bounds
    $$1/(4-4\alpha) \leq R(\alpha) \leq (\alpha/\sin^2(\pi \alpha)) \wedge (\alpha/(1-\alpha)). $$
    This yields that  $Z_\alpha^p$ with $\alpha \in (1/2,1)$ and $p \leq -\alpha/(1-\alpha)$ is also unimodal. We leave its SD property for future research. 
\end{remark}

\begin{remark}
    In conclusion, the following figure gives a visual presentation of the ID and related properties of powers of positive stable random variables, i.e. $Z_\alpha ^p$. 
\end{remark}

\includegraphics[scale = 0.6]{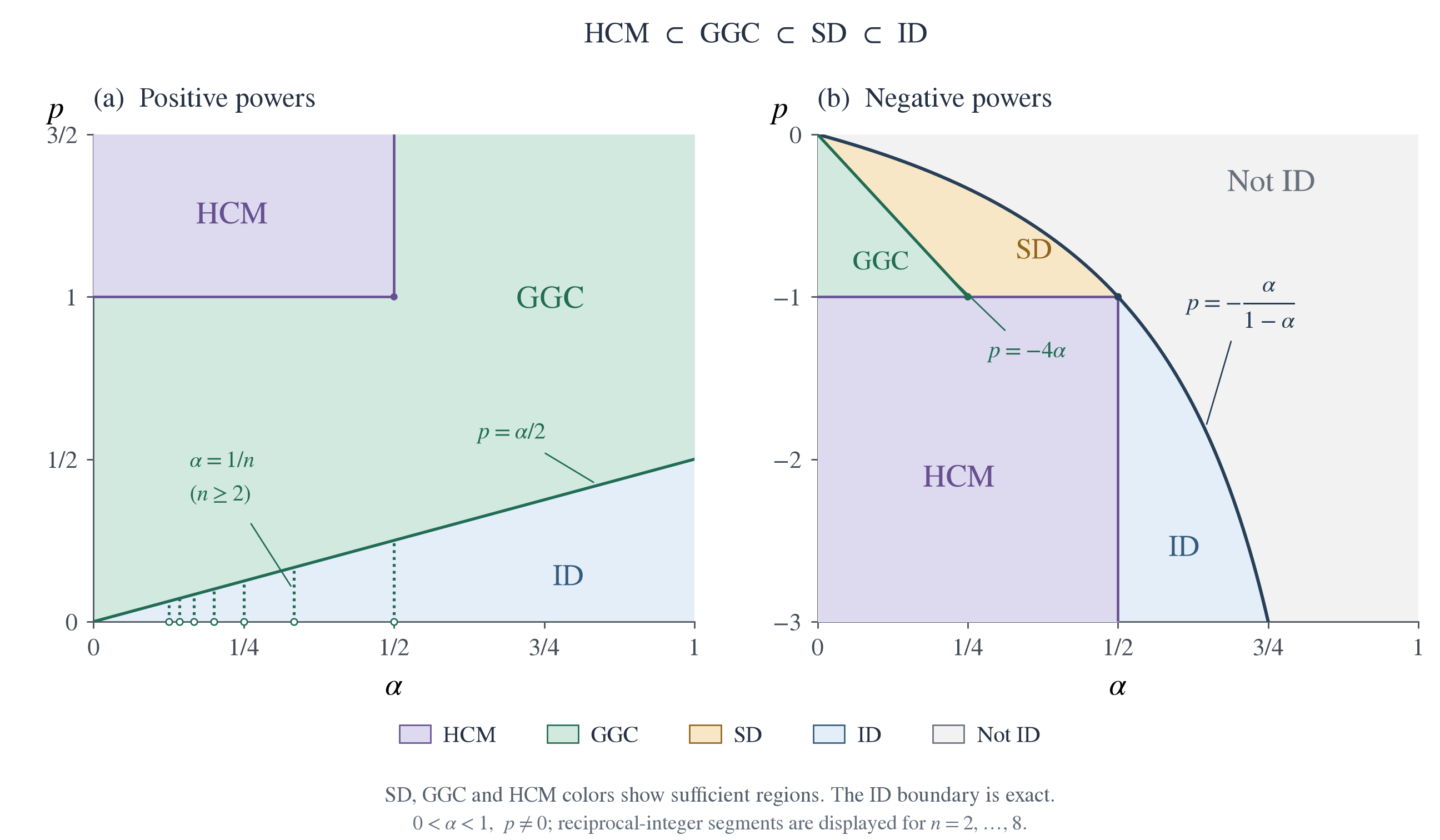}

\newpage

\bibliographystyle{plain} 
\bibliography{IDPowerStable}

@article {BS15,
    AUTHOR = {Bosch, Pierre and Simon, Thomas},
     TITLE = {On the infinite divisibility of inverse beta distributions},
   JOURNAL = {Bernoulli},
  FJOURNAL = {Bernoulli. Official Journal of the Bernoulli Society for
              Mathematical Statistics and Probability},
    VOLUME = {21},
      YEAR = {2015},
    NUMBER = {4},
     PAGES = {2552--2568},
      ISSN = {1350-7265,1573-9759},
   MRCLASS = {60E07 (33C05)},
  MRNUMBER = {3378477},
MRREVIEWER = {E.\ Sandhya},
       DOI = {10.3150/14-BEJ654},
       URL = {https://doi.org/10.3150/14-BEJ654},
}

@article {Pat11,
    AUTHOR = {Patie, P.},
     TITLE = {A refined factorization of the exponential law},
   JOURNAL = {Bernoulli},
  FJOURNAL = {Bernoulli. Official Journal of the Bernoulli Society for
              Mathematical Statistics and Probability},
    VOLUME = {17},
      YEAR = {2011},
    NUMBER = {2},
     PAGES = {814--826},
      ISSN = {1350-7265,1573-9759},
   MRCLASS = {60E07 (60G18 60G51 60J25)},
  MRNUMBER = {2787616},
       DOI = {10.3150/10-BEJ292},
       URL = {https://doi.org/10.3150/10-BEJ292},
}

@book {SH04,
    AUTHOR = {Steutel, Fred W. and van Harn, Klaas},
     TITLE = {Infinite divisibility of probability distributions on the real
              line},
    SERIES = {Monographs and Textbooks in Pure and Applied Mathematics},
    VOLUME = {259},
 PUBLISHER = {Marcel Dekker, Inc., New York},
      YEAR = {2004},
     PAGES = {xii+546},
      ISBN = {0-8247-0724-9},
   MRCLASS = {60-02 (60E07 60J10 60J80 60K05 60K25)},
  MRNUMBER = {2011862},
MRREVIEWER = {Rudolf\ Gr\"ubel},
}

@book {SSV10,
    AUTHOR = {Schilling, Ren\'e{} L. and Song, Renming and Vondracek, Zoran},
     TITLE = {Bernstein functions},
    SERIES = {De Gruyter Studies in Mathematics},
    VOLUME = {37},
   EDITION = {First},
      NOTE = {Theory and applications},
 PUBLISHER = {Walter de Gruyter \& Co., Berlin},
      YEAR = {2010},
     PAGES = {xiv+410},
      ISBN = {978-3-11-025229-3; 978-3-11-026933-8},
   MRCLASS = {60E07 (31C05 43A35 44A10 47A57 47D06 60E10 60Jxx)},
  MRNUMBER = {2978140},
MRREVIEWER = {David\ Applebaum},
       DOI = {10.1515/9783110269338},
       URL = {https://doi.org/10.1515/9783110269338},
}

@book {Sat99,
    AUTHOR = {Sato, Ken-iti},
     TITLE = {L\'evy processes and infinitely divisible distributions},
    SERIES = {Cambridge Studies in Advanced Mathematics},
    VOLUME = {68},
      NOTE = {Translated from the 1990 Japanese original,
              Revised by the author},
 PUBLISHER = {Cambridge University Press, Cambridge},
      YEAR = {1999},
     PAGES = {xii+486},
      ISBN = {0-521-55302-4},
   MRCLASS = {60G51 (60E07 60G18 60G52 60J45)},
  MRNUMBER = {1739520},
MRREVIEWER = {N.\ H.\ Bingham},
}

@article {JS13,
    AUTHOR = {Jedidi, Wissem and Simon, Thomas},
     TITLE = {Further examples of {GGC} and {HCM} densities},
   JOURNAL = {Bernoulli},
  FJOURNAL = {Bernoulli. Official Journal of the Bernoulli Society for
              Mathematical Statistics and Probability},
    VOLUME = {19},
      YEAR = {2013},
    NUMBER = {5A},
     PAGES = {1818--1838},
      ISSN = {1350-7265,1573-9759},
   MRCLASS = {60E05 (60E07 60E10 62E10)},
  MRNUMBER = {3129035},
       DOI = {10.3150/12-BEJ431},
       URL = {https://doi.org/10.3150/12-BEJ431},
}

@book {Bon92,
    AUTHOR = {Bondesson, Lennart},
     TITLE = {Generalized gamma convolutions and related classes of
              distributions and densities},
    SERIES = {Lecture Notes in Statistics},
    VOLUME = {76},
 PUBLISHER = {Springer-Verlag, New York},
      YEAR = {1992},
     PAGES = {viii+173},
      ISBN = {0-387-97866-6},
   MRCLASS = {60E07 (42A85 60E10 62E10)},
  MRNUMBER = {1224674},
MRREVIEWER = {F.\ W.\ Steutel},
       DOI = {10.1007/978-1-4612-2948-3},
       URL = {https://doi.org/10.1007/978-1-4612-2948-3},
}

@article {Bon15,
    AUTHOR = {Bondesson, Lennart},
     TITLE = {A class of probability distributions that is closed with
              respect to addition as well as multiplication of independent
              random variables},
   JOURNAL = {J. Theoret. Probab.},
  FJOURNAL = {Journal of Theoretical Probability},
    VOLUME = {28},
      YEAR = {2015},
    NUMBER = {3},
     PAGES = {1063--1081},
      ISSN = {0894-9840,1572-9230},
   MRCLASS = {60E07 (60E10)},
  MRNUMBER = {3413969},
MRREVIEWER = {Oliver\ Johnson},
       DOI = {10.1007/s10959-013-0523-y},
       URL = {https://doi.org/10.1007/s10959-013-0523-y},
}

@book {EMOT53,
    AUTHOR = {Erd\'elyi, Arthur and Magnus, Wilhelm and Oberhettinger, Fritz
              and Tricomi, Francesco G.},
     TITLE = {Higher transcendental functions. {V}ols. {I}, {II}},
      NOTE = {Based, in part, on notes left by Harry Bateman},
 PUBLISHER = {McGraw-Hill Book Co., Inc., New York-Toronto-London},
      YEAR = {1953},
     PAGES = {xxvi+302, xvii+396},
   MRCLASS = {33.0X},
  MRNUMBER = {58756},
MRREVIEWER = {E.\ T.\ Copson},
}

@article {Bos15,
    AUTHOR = {Bosch, Pierre},
     TITLE = {H{CM} property and the half-{C}auchy distribution},
   JOURNAL = {Probab. Math. Statist.},
  FJOURNAL = {Probability and Mathematical Statistics},
    VOLUME = {35},
      YEAR = {2015},
    NUMBER = {2},
     PAGES = {191--200},
      ISSN = {0208-4147,2300-8113},
   MRCLASS = {60E07 (60E05)},
  MRNUMBER = {3433648},
MRREVIEWER = {Gali\ Srilakshminarayana},
}

@article {BS13,
    AUTHOR = {Bosch, Pierre and Simon, Thomas},
     TITLE = {On the self-decomposability of the {F}r\'echet distribution},
   JOURNAL = {Indag. Math. (N.S.)},
  FJOURNAL = {Koninklijke Nederlandse Akademie van Wetenschappen.
              Indagationes Mathematicae. New Series},
    VOLUME = {24},
      YEAR = {2013},
    NUMBER = {3},
     PAGES = {626--636},
      ISSN = {0019-3577,1872-6100},
   MRCLASS = {60E07},
  MRNUMBER = {3064566},
MRREVIEWER = {E.\ Sandhya},
       DOI = {10.1016/j.indag.2013.04.006},
       URL = {https://doi.org/10.1016/j.indag.2013.04.006},
}

@article {Yam78,
    AUTHOR = {Yamazato, Makoto},
     TITLE = {Unimodality of infinitely divisible distribution functions of
              class {$L$}},
   JOURNAL = {Ann. Probab.},
  FJOURNAL = {The Annals of Probability},
    VOLUME = {6},
      YEAR = {1978},
    NUMBER = {4},
     PAGES = {523--531},
      ISSN = {0091-1798,2168-894X},
   MRCLASS = {60E05},
  MRNUMBER = {482941},
MRREVIEWER = {Stephen\ James\ Wolfe},
       URL =
              {http://links.jstor.org/sici?sici=0091-1798(197808)6:4<523:UOIDDF>2.0.CO;2-S&origin=MSN},
}

@article {Sim12,
    AUTHOR = {Simon, Thomas},
     TITLE = {On the unimodality of power transformations of positive stable
              densities},
   JOURNAL = {Math. Nachr.},
  FJOURNAL = {Mathematische Nachrichten},
    VOLUME = {285},
      YEAR = {2012},
    NUMBER = {4},
     PAGES = {497--506},
      ISSN = {0025-584X,1522-2616},
   MRCLASS = {60E07 (26A09 60E15)},
  MRNUMBER = {2899640},
MRREVIEWER = {Makoto\ Yamazato},
       DOI = {10.1002/mana.201000062},
       URL = {https://doi.org/10.1002/mana.201000062},
}

@article {HSW20,
    AUTHOR = {Hasebe, Takahiro and Simon, Thomas and Wang, Min},
     TITLE = {Some properties of the free stable distributions},
   JOURNAL = {Ann. Inst. Henri Poincar\'e{} Probab. Stat.},
  FJOURNAL = {Annales de l'Institut Henri Poincar\'e{} Probabilit\'es et
              Statistiques},
    VOLUME = {56},
      YEAR = {2020},
    NUMBER = {1},
     PAGES = {296--325},
      ISSN = {0246-0203,1778-7017},
   MRCLASS = {60E07 (46L54)},
  MRNUMBER = {4058989},
MRREVIEWER = {Octavio\ Arizmendi},
       DOI = {10.1214/19-AIHP962},
       URL = {https://doi.org/10.1214/19-AIHP962},
}

@book {Zol86,
    AUTHOR = {Zolotarev, V. M.},
     TITLE = {One-dimensional stable distributions},
    SERIES = {Translations of Mathematical Monographs},
    VOLUME = {65},
    EDITOR = {Silver, Ben},
      NOTE = {Translated from the Russian by H. H. McFaden},
 PUBLISHER = {American Mathematical Society, Providence, RI},
      YEAR = {1986},
     PAGES = {x+284},
      ISBN = {0-8218-4519-5},
   MRCLASS = {60-01 (60E07 60F05)},
  MRNUMBER = {854867},
       DOI = {10.1090/mmono/065},
       URL = {https://doi.org/10.1090/mmono/065},
}

@article {Dem11,
    AUTHOR = {Demni, Nizar},
     TITLE = {Kanter random variable and positive free stable distributions},
   JOURNAL = {Electron. Commun. Probab.},
  FJOURNAL = {Electronic Communications in Probability},
    VOLUME = {16},
      YEAR = {2011},
     PAGES = {137--149},
      ISSN = {1083-589X},
   MRCLASS = {60E07 (33C60 46L54 60B20)},
  MRNUMBER = {2783335},
MRREVIEWER = {Zhaozhi\ Fan},
       DOI = {10.1214/ECP.v16-1608},
       URL = {https://doi.org/10.1214/ECP.v16-1608},
}

@article {Kan75,
    AUTHOR = {Kanter, Marek},
     TITLE = {Stable densities under change of scale and total variation
              inequalities},
   JOURNAL = {Ann. Probability},
  FJOURNAL = {The Annals of Probability},
    VOLUME = {3},
      YEAR = {1975},
    NUMBER = {4},
     PAGES = {697--707},
      ISSN = {0091-1798},
   MRCLASS = {60E05},
  MRNUMBER = {436265},
MRREVIEWER = {Barthel\ W.\ Huff},
       DOI = {10.1214/aop/1176996309},
       URL = {https://doi.org/10.1214/aop/1176996309},
}

@article {BS16,
    AUTHOR = {Bosch, Pierre and Simon, Thomas},
     TITLE = {A proof of {B}ondesson's conjecture on stable densities},
   JOURNAL = {Ark. Mat.},
  FJOURNAL = {Arkiv f\"or Matematik},
    VOLUME = {54},
      YEAR = {2016},
    NUMBER = {1},
     PAGES = {31--38},
      ISSN = {0004-2080,1871-2487},
   MRCLASS = {60E10 (60E07)},
  MRNUMBER = {3475816},
MRREVIEWER = {Zbigniew\ J.\ Jurek},
       DOI = {10.1007/s11512-015-0216-0},
       URL = {https://doi.org/10.1007/s11512-015-0216-0},
}
\end{document}